\newtheorem{thm}{Theorem}[section]
\newtheorem{prop}[thm]{Proposition}
\newtheorem{lem}[thm]{Lemma}
\theoremstyle{definition}
\numberwithin{equation}{section}
\newtheorem{ex}[thm]{Example}
\newtheorem{defn}[thm]{Definition}
\def\bbC{\mathbb{C}}
\def\bbT{\mathbb{T}}
\def\bbZ{\mathbb{Z}}
\def\bfr{\mathbf{S}}
\def\bfd{\mathbf{d}}
\def\bfr{\mathbf{r}}
\def\bfz{\mathbf{z}}
\def\calL{\mathcal{L}}
\def\ve{\varepsilon}
\def\Ad{\mathrm{Ad}}
\def\sgn{\mathrm{sgn}}
\begin{document}
\bibliographystyle{amsalpha}

\title[Quantum generalized cluster algebras]
{Quantum generalized cluster algebras
and quantum dilogarithms of higher degrees}

\author{Tomoki Nakanishi}
\address{\noindent Graduate School of Mathematics, Nagoya University, 
Chikusa-ku, Nagoya,
464-8604, Japan}
\email{nakanisi@math.nagoya-u.ac.jp}

\subjclass[2010]{13F60}

\date{}
\maketitle
\begin{abstract}
We extend the notion of  the quantization of the coefficients of the
ordinary cluster algebras
to the generalized cluster algebras by Chekhov and Shapiro.
In parallel to the ordinary case, it is tightly integrated with certain generalizations of
the ordinary quantum dilogarithm,
which we call the quantum dilogarithms of higher degrees.
As an application,
we derive the identities of these generalized quantum dilogarithms 
associated with any period of quantum $Y$-seeds. 
%{\bf (ver.140930: not for distribution)}
\end{abstract}

\section{Introduction}

The {\em generalized cluster algebras\/}
were introduced by Chekhov and Shapiro
\cite{Chekhov11}.
They naturally generalize the (ordinary) cluster algebras by
Fomin and Zelevinsky \cite{Fomin03a}.
The main feature of the generalized cluster algebras 
is the appearance of {\em polynomials\/}
 in the exchange relations of cluster variables and
coefficients, instead of {\em binomials\/} in the ordinary case.
Generalized cluster algebras naturally appear so far in
Poisson dynamics \cite{Gekhtman02}, Teichm\"uller theory
\cite{Chekhov11}, representation theory \cite{Gleitz14},
exact WKB analysis \cite{Iwaki14b}, etc.
It has been shown
in
 \cite{Chekhov11,Nakanishi14a}
 that essentially all important properties
of the ordinary cluster algebras 
are naturally extended to the generalized ones.

In this note we demonstrate that the notion of {\em quantum cluster algebras\/}
is also extended to the generalized ones.
 To be more precise,
there are two kinds of formulations of quantum cluster algebras,
 the one quantizing the {\em cluster variables\/} by \cite{Berenstein05b} and the one quantizing the {\em coefficients\/} by \cite{Fock03,Fock07},
 and  it is known that they are closely related to each other.
Here, we concentrate on the latter one.
As shown by \cite{Fock03,Fock07},
in the ordinary case,
the quantization of the coefficients is tightly integrated with
the {\em quantum dilogarithm\/}  \cite{Faddeev93,Faddeev94}.
Similarly,
in the generalized case,
 it is  tightly integrated with certain generalizations
of the quantum dilogarithm, which we call the {\em quantum dilogarithms of higher degrees}.
As an application,
we derive the identities of these generalized quantum dilogarithms 
associated with any period of quantum $Y$-seeds,
which are also parallel to the ones in the ordinary case.

The main message of this note is
that
the fundamental (and perhaps all) features of the quantum cluster algebras
are also extended to the generalized ones.

\section{Quantum dilogarithms of higher degrees}
To begin with,
let us recall some basic facts about the dilogarithm, the $q$-dilogarithm, and the quantum dilogarithm.
The {\em dilogarithm\/} $\mathrm{Li}_2(x)$ is defined by
\begin{align}
\mathrm{Li}_2(x)=\sum_{n=1}^{\infty} \frac{x^n}{n^2}.
\end{align}
Let $q$ be a formal variable.
The {\em $q$-dilogarithm} is defined
as follows.
\begin{align}
\label{eq:L1}
\calL_{2,q}(x)=
\sum_{n=1}^{\infty}
\frac{x^n}{n(q^n-q^{-n})}
=
\frac{1}{q-q^{-1}}\sum_{n=1}^{\infty}
\frac{x^n}{n[n]_q},
\end{align}
where $[n]_q=(q^n-q^{-n})/(q-q^{-1})$ is the standard $q$-number.
The power series \eqref{eq:L1} converges for $|x|<1$ and $|q|<1$,
and the following asymptotic behavior holds when $q\rightarrow 1^{-}$,
\begin{align}
\label{eq:asym1}
\calL_{2,q}(x) 
\sim
\frac{\mathrm{Li_2}(x)}{q^2-1}\sim
\frac{\mathrm{Li_2}(x)}{\log q^2}.
\end{align}
This is clear form the second expression of $\calL_{2,q}(x)$ in \eqref{eq:L1}
and the property $\lim_{q\rightarrow 1} [n]_q=n$.

Following \cite{Faddeev93,Faddeev94} (up to some convention),
we introduce the {\em quantum dilogarithm\/}  $\Psi_{q}(x)$,
which is a formal power series in $x$
 with coefficients in $\bbC(q)$, as follows.
\begin{align}
\Psi_{q}(x)&=
\prod_{m=0}^{\infty}
\left(
1+q^{2m+1}x
\right)^{-1}.
\end{align}
In particular, the quantum dilogarithm  $\Psi_{q}(x)$
should be distinguished from the $q$-dilogarithm $\calL_{2,q}(x)$.
The formal power series $\Psi_{q}(x)$ is characterized by the
following recursion relation with initial condition,
\begin{align}
\label{eq:rec1}
\Psi_{q}(0)&=1,
\quad
\Psi_{q}(q^{\pm2}x)
=
\left(
1+qx
\right)^{\pm1}
\Psi_{q}(x),
\end{align}
where two relations in the latter equality are equivalent to each other.
A little confusingly, the quantum dilogarithm is actually the exponential of the $q$-dilogarithm;
namely,
\begin{align}
\label{eq:psi1}
\Psi_{q}(x) = \exp\left(
-\calL_{2,q}(-x) 
\right).
\end{align}
 This is easily shown by using the recursion relation \eqref{eq:rec1}.

Alternatively, one may define
the {\em dilogarithm\/} $\mathrm{Li}_2(x)$ by the integral
\begin{align}
\mathrm{Li}_2(x)=-\int_0^x \log(1-y)
\frac{dy}{y}
=
-\int_0^{-x} \log(1+y)
\frac{dy}{y}.
\end{align}
Then we have
\begin{align}
\label{eq:asym2}
\begin{split}
\log \Psi_{q}(x) 
&=-
\sum_{m=0}^{\infty}
\log(1+q^{2m+1}x)\\
&=\frac{-1}{1-q^2}
\sum_{m=0}^{\infty}
\log(1+q^{2m+1}x)
\frac{q^{2m+1}x - q^{2m+3}x}{q^{2m+1}x}\\ 
&\sim
\frac{-1}{1-q^2}
\int_0^{x} \log(1+y)
\frac{dy}{y}
=
\frac{1}{1-q^2}\mathrm{Li}_2(-x)
\quad
 (q\rightarrow 1^-).
\end{split}
\end{align}
This completely agrees with
\eqref{eq:asym1} and \eqref{eq:psi1}.
 
Now let us generalize the quantum dilogarithm $\Psi_{q}(x)$
to the ones with higher degrees.
For any field  $F$, let $F(q)$ be the field of the rational functions in
the variable $q$.

\begin{defn}
\label{defn:dilog1}
Let $F$ be a field,
 let  $d$ be a positive integer,
and let $\bfz=(z_1,\dots,z_{d-1})$ be a $d-1$-tuple of elements
in $F$.
% satisfying the {\em reciprocity condition\/}
%\begin{align}
%z_i = z_{d-i},
%\quad
%i=1,\dots,d-1.
%\end{align}
 When $d=1$, $\bfz$ is regarded as the empty sequence $()$.
We set $z_0=z_d=1$.
Then, we define a formal power series
 $\Psi_{d,\bfz,q}(x)$ in $x$  with coefficients in $F(q)$ as follows:
\begin{align}
\label{eq:psi2}
\Psi_{d,\bfz,q}(x)&=
\prod_{m=0}^{\infty}
\left(
\sum_{s=0}^d 
z_s q^{s(2m+1)} x^s
\right)^{-1}.
\end{align}
When $d=1$, it is the usual quantum dilogarithm $\Psi_{1,(), q}(x)=
\Psi_{q}(x)$.
We call $\Psi_{d,\bfz,q}(x)$ the {\em quantum dilogarithm of degree $d$
with coefficients $\bfz$}.
\end{defn}
%We remark that they are {\em not\/} regarded as the quantum polylogarithms. 

\begin{prop}
The formal power series  $\Psi_{d,\bfz,q}(x)$ is characterized by the
 the following recursion relation with initial condition:
 \begin{align}
\label{eq:rec2}
\Psi_{d,\bfz,q}(0)&=1,
\\
\label{eq:rec3}
\Psi_{d,\bfz,q}(q^{\pm2}x)
&=
\left(
\sum_{s=0}^{d} z_s q^{\pm s} x^s
\right)^{\pm1}
\Psi_{d,\bfz,q}(x),
\end{align}
where
 two relations in \eqref{eq:rec3} are equivalent to each other.
\end{prop}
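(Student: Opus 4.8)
The plan is to verify that the product \eqref{eq:psi2} satisfies \eqref{eq:rec2} and \eqref{eq:rec3}, to check that the two signs in \eqref{eq:rec3} are interchangeable, and finally to show that \eqref{eq:rec2}--\eqref{eq:rec3} pin down a unique formal power series. I would write $P_m(x)=\sum_{s=0}^{d} z_s q^{s(2m+1)} x^s$ for the $m$-th factor, so that $\Psi_{d,\bfz,q}(x)=\prod_{m=0}^{\infty} P_m(x)^{-1}$. Since $z_0=1$, every $P_m$ has constant term $1$, and hence $\Psi_{d,\bfz,q}(0)=\prod_{m} P_m(0)^{-1}=1$, which is \eqref{eq:rec2}.

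The engine of the argument is the scaling identity $P_m(q^2 x)=P_{m+1}(x)$: substituting $q^2 x$ multiplies the degree-$s$ term by $q^{2s}$, turning $q^{s(2m+1)}$ into $q^{s(2m+3)}$. Granting this, the substitution $x\mapsto q^2 x$ merely reindexes the product, so I would compute
\[
\Psi_{d,\bfz,q}(q^2 x)=\prod_{m=0}^{\infty} P_{m+1}(x)^{-1}=\prod_{m=1}^{\infty} P_m(x)^{-1}=P_0(x)\,\Psi_{d,\bfz,q}(x),
\]
and since $P_0(x)=\sum_{s=0}^{d} z_s q^{s} x^s$ this is exactly the upper-sign case of \eqref{eq:rec3}. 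To obtain the equivalence of signs, I would replace $x$ by $q^{-2}x$ here and observe $P_0(q^{-2}x)=\sum_{s=0}^{d} z_s q^{-s} x^s$, which returns the lower-sign case; running the same substitution backwards recovers the upper one, so the two relations in \eqref{eq:rec3} are equivalent.

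For the characterization it remains to see that \eqref{eq:rec2}--\eqref{eq:rec3} admit only one solution. I would write a candidate as $\Psi(x)=\sum_{n\ge 0} c_n x^n$ and compare the coefficient of $x^n$ on both sides of $\Psi(q^2 x)=P_0(x)\Psi(x)$; after cancelling the $s=0$ term this yields
\[
(q^{2n}-1)\,c_n=\sum_{s=1}^{\min(d,n)} z_s q^{s} c_{n-s}.
\]
Because $q$ is transcendental over $F$, the element $q^{2n}-1$ is nonzero, hence invertible, in $F(q)$ for every $n\ge 1$; together with $c_0=1$ this determines all $c_n$ uniquely and confirms that they lie in $F(q)$. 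As $\Psi_{d,\bfz,q}(x)$ itself satisfies the recursion and the initial condition, it must coincide with this unique solution.

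The only real delicacy I anticipate is the legitimacy of the infinite-product manipulation used in the recursion step, since the factors $P_m(x)^{-1}$ do not tend to $1$ in the $x$-adic topology and the reindexing is not a priori a formal identity. I would handle this by interpreting the product in the convergent regime $|q|<1$ (and $|x|$ small), as in the discussion preceding Definition \ref{defn:dilog1}, where the reindexing is a genuine identity of analytic functions; the coefficient recursion then transports this identity to the level of formal power series in $F(q)[[x]]$, where the uniqueness argument lives. Everything else is a direct computation.
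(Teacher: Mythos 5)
Your proposal is correct and its core is exactly the paper's argument: the paper's proof consists of the reindexing computation $\Psi_{d,\bfz,q}(q^{2}x)=\prod_{m=1}^{\infty}\bigl(\sum_{s}z_s q^{s(2m+1)}x^s\bigr)^{-1}=\bigl(\sum_{s}z_s q^{s}x^s\bigr)\Psi_{d,\bfz,q}(x)$, with the initial condition, sign equivalence, and uniqueness dismissed as ``easily shown'' --- precisely the details you fill in, and your coefficient recursion $(q^{2n}-1)c_n=\sum_{s=1}^{\min(d,n)}z_s q^s c_{n-s}$ is the right way to get the characterization. One small caveat: your fix for the infinite-product subtlety via analytic convergence for $|q|<1$ implicitly assumes $F\subseteq\bbC$, while the paper allows an arbitrary field $F$; the uniform fix is to read the product coefficientwise in the $q$-adic topology (each $x^n$-coefficient is a $q$-adically convergent sum in $F[[q]]$, and your recursion then shows it lies in $F(q)$), under which your reindexing argument goes through verbatim since it holds for every finite partial product.
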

\begin{proof}
For example, we have
\begin{align}
\begin{split}
\Psi_{d,\bfz,q}(q^{2}x)
&=
\prod_{m=0}^{\infty}
\left(
\sum_{s=0}^d 
z_s q^{s(2m+1)} q^{2s} x^s
\right)^{-1}\\
&=
\prod_{m=1}^{\infty}
\left(
\sum_{s=0}^d 
z_s q^{s(2m+1)} x^s
\right)^{-1}
=
\left(
\sum_{s=0}^{d} z_s q^{s} x^s
\right)
\Psi_{d,\bfz,q}(x).
\end{split}
\end{align}
The rest of the properties are  easily shown.
\end{proof}

For any integer $a$, let us introduce the {\em sign function\/}
\begin{align}
\sgn(a)=
\begin{cases}
+ & a > 0\\
0 & a =0\\
- & a<0.
\end{cases}
\end{align}
Here and below, we identify the signs $\pm$ with numbers $\pm 1$.

The following formula will be useful later.
\begin{prop} 
\label{prop:rec1}
For any integer $a$, the following equality holds.
\begin{align}
\Psi_{d,\bfz,q}(q^{2a}x)
=
\Biggl(
\prod_{m=1}^{|a|}
\Biggl(
\sum_{s=0}^d
z_s
q^{\sgn(a) (2m-1)s}
x^s
\Biggr)^{\sgn(a)}
\Biggr)
\Psi_{d,\bfz,q}(x).
\end{align}
\end{prop}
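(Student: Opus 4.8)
The plan is to prove the identity by induction on $|a|$, treating the positive and negative cases separately, with the recursion relation \eqref{eq:rec3} supplying the inductive step. The base case $a=0$ is immediate from \eqref{eq:rec2} together with the convention that an empty product equals $1$: the right-hand side collapses to $\Psi_{d,\bfz,q}(x)$, which agrees with the left-hand side since $q^{2a}=1$.

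For $a>0$ I would induct upward using the $+$ branch of \eqref{eq:rec3}. Writing $q^{2(a+1)}x=q^{2}(q^{2a}x)$ and applying \eqref{eq:rec3} with $x$ replaced by $q^{2a}x$ produces the factor $\sum_{s=0}^{d} z_s q^{s}(q^{2a}x)^s=\sum_{s=0}^{d} z_s q^{(2a+1)s}x^s$. The central observation is that this is exactly the $m=a+1$ term of the claimed product, since $2a+1=2(a+1)-1$ and $\sgn(a+1)=+$; substituting the inductive hypothesis for $\Psi_{d,\bfz,q}(q^{2a}x)$ then extends the product from $m=1,\dots,a$ to $m=1,\dots,a+1$, closing the step.

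For $a<0$ I would run the symmetric argument downward using the $-$ branch of \eqref{eq:rec3}. Setting $b=-a>0$ and applying the inverse recursion with $x$ replaced by $q^{-2b}x$ yields the factor $\left(\sum_{s=0}^{d} z_s q^{-(2b+1)s}x^s\right)^{-1}$, which matches the $m=b+1$ term with $\sgn(a)=-$ and exponent $\sgn(a)(2m-1)s$. Since the two relations in \eqref{eq:rec3} are equivalent, this case demands no genuinely new input beyond the positive one.

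The only real obstacle is bookkeeping: one must keep the exponent $q^{(2a+1)s}$, its rewriting as $q^{(2(a+1)-1)s}$, and the placement of $\sgn(a)$ in both the exponent and the outer power consistent throughout, so that the running index is correct and the two sign cases assemble into the single uniform formula stated in Proposition \ref{prop:rec1}. There is no analytic or structural difficulty beyond this careful tracking of signs and powers of $q$.
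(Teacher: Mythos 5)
Your proposal is correct and follows exactly the paper's approach: the paper's proof is precisely "obtained from \eqref{eq:rec3} by induction on $a$," and your write-up simply fills in the details (base case $a=0$ as an empty product, the $+$ branch of \eqref{eq:rec3} for the upward step, the $-$ branch for the downward step). The sign and exponent bookkeeping you describe is handled correctly, so there is nothing to change.
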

\begin{proof}
This is obtained from 
\eqref{eq:rec3} by induction on $a$.
\end{proof}

In some cases
the quantum dilogarithms of higher degrees
are
factorized by the ordinary quantum dilogarithm.

\begin{prop}
\label{prop:fac1}
Factorization formula.
Suppose that the following factorization
\begin{align}
\label{eq:fac1}
\sum_{s=0}^d
z_s  x^s
=
\prod_{s=1}^d (1-w_s x)
\end{align}
occurs for some $w_1,\dots, w_d \in F$.
Then, we have
\begin{align}
\label{eq:fac2}
\Psi_{d,\bfz,q}(x)
=\prod_{s=1}^d \Psi_q(-w_sx).
\end{align}
\end{prop}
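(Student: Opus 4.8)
The plan is to manipulate the defining infinite product \eqref{eq:psi2} directly, applying the factorization hypothesis \eqref{eq:fac1} one factor at a time. The key observation is that in the $m$-th factor of $\Psi_{d,\bfz,q}(x)$ the powers of $q$ and $x$ combine, since $z_s q^{s(2m+1)}x^s=z_s(q^{2m+1}x)^s$; hence that factor is exactly the polynomial $\sum_{s=0}^d z_s y^s$ evaluated at $y=q^{2m+1}x$. This is what makes the hypothesis \eqref{eq:fac1} immediately applicable term by term.

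First I would substitute $y=q^{2m+1}x$ into \eqref{eq:fac1}, obtaining for each $m\ge 0$ the identity
\begin{align}
\sum_{s=0}^d z_s q^{s(2m+1)} x^s = \prod_{s=1}^d (1 - w_s q^{2m+1}x).
\end{align}
Plugging this into \eqref{eq:psi2} and then exchanging the finite product over $s$ with the product over $m$ gives
\begin{align}
\Psi_{d,\bfz,q}(x)
= \prod_{m=0}^{\infty} \prod_{s=1}^d (1 - w_s q^{2m+1}x)^{-1}
= \prod_{s=1}^d \prod_{m=0}^{\infty} (1 - w_s q^{2m+1}x)^{-1}.
\end{align}
The last step is to recognize the inner product: from the defining product for $\Psi_q$ one has $\Psi_q(-w_s x)=\prod_{m=0}^{\infty}(1+q^{2m+1}(-w_s x))^{-1}=\prod_{m=0}^{\infty}(1-w_s q^{2m+1}x)^{-1}$, which yields \eqref{eq:fac2} at once.

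The only point needing a word of justification is the interchange of the two products, but this is not a genuine obstacle: everything is read as an identity of formal power series in $x$ with coefficients in $F(q)$, and each factor has constant term $z_0=1$ and is therefore invertible, so rearranging the finite product over $s$ against the product over $m$ is harmless. As a cross-check one could instead argue by uniqueness: both sides equal $1$ at $x=0$, and using the hypothesis \eqref{eq:fac1} at $y=qx$ in the form $\prod_{s=1}^d(1-w_s qx)=\sum_{s=0}^d z_s q^s x^s$ together with the recursion \eqref{eq:rec1} applied to each $\Psi_q(-w_s x)$, the right-hand side of \eqref{eq:fac2} is seen to satisfy the same recursion \eqref{eq:rec3} with initial condition \eqref{eq:rec2} as the left-hand side, whence they coincide by the characterization established above.
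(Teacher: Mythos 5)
Your proof is correct and matches the paper's own argument essentially verbatim: the paper likewise substitutes the factorization \eqref{eq:fac1} at $y=q^{2m+1}x$ into the defining product \eqref{eq:psi2} and swaps the two products, and it also offers, as an alternative, exactly your cross-check that both sides satisfy the recursion \eqref{eq:rec2}--\eqref{eq:rec3} and hence coincide. Your added remark justifying the interchange of products at the level of formal power series is a harmless (and welcome) extra detail.
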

\begin{proof}
One can directly observe the factorization in \eqref{eq:fac2}
as 
\begin{align}
\Psi_{d,\bfz,q}(x)=
\prod_{m=0}^{\infty}
\left(
\sum_{s=0}^d 
z_s q^{s(2m+1)} x^s
\right)^{-1}
=
\prod_{m=0}^{\infty}
\prod_{s=1}^d (1-w_s q^{2m+1} x)^{-1}.
\end{align}
Alternatively,
under the assumption \eqref{eq:fac1},
the right hand side of \eqref{eq:fac2} satisfies
\eqref{eq:rec2} and \eqref{eq:rec3}.
Thus, thanks to Proposition
\ref{prop:rec1}, we have \eqref{eq:fac2}.
\end{proof}

\begin{ex} Let us consider the special case
where $F=\bbC$ and 
the coefficients $\bfz$ is trivial, i.e., $\bfz=\mathbf{1}:=(1,\dots,1)$.
In this case we have the factorization
\begin{align}
\label{eq:pro6}
\sum_{s=0}^d x^s
=\prod_{s=1}^d (1-\omega^s x),
\end{align}
where
\begin{align}
\omega = \exp(2\pi i / (d+1)). 
\end{align}
Thus, by Proposition \ref{prop:fac1}, we have
\begin{align}
\label{eq:pro5}
\Psi_{d,\mathbf{1},q}(x)
=
\prod_{s=1}^d
 \Psi_q
(-\omega^s x).
\end{align}
On the other hand,
there is another factorization formula,
\begin{align}
\label{eq:pro3}
\Psi_{d,\mathbf{1},q}(x)=
\Psi_{q^{d+1}}(
-x^{d+1}) 
\Psi_{q}(-x) ^{-1}.
\end{align}
This is due to the following alternative expression of $\Psi_{d,\mathbf{1},q}(x)$,
\begin{align}
\label{eq:psi3}
\Psi_{d,\mathbf{1},q}(x)
=
\prod_{m=0}^{\infty}
\frac{
1- q^{2m+1}x
}{
\displaystyle
1- (q^{2m+1}x)^{d+1}
}.
\end{align}
Therefore, by \eqref{eq:asym1} and \eqref{eq:psi1},
we have the following asymptotic behavior in the limit $q\rightarrow 1^-$:
\begin{align}
\log \Psi_{d,\mathbf{1},q}(x)
\sim
&\frac{1}{1-q^2} \sum_{s=1}^d \mathrm{Li}_2(\omega^s x)\\
\sim&\frac{1}{1- q^2} \left(\frac{1}{d+1} \mathrm{Li}_2( x^{d+1})
-\mathrm{Li}_2( x)
\right).
\end{align}
In fact, these two expressions coincide due to
the well-known identity for $\mathrm{Li}_2(x)$ called the 
{\em factorization formula\/} \cite[Eq.~(1.14)]{Lewin81},
\begin{align}
\label{eq:Lid2}
\frac{1}{d+1}\mathrm{Li}_{2}(x^{d+1})
=
\sum_{s=0}^d
\mathrm{Li}_{2}(\omega^s x).
\end{align}
As a side remark, in view of
\eqref{eq:psi1},
the expressions
\eqref{eq:pro5} and \eqref{eq:pro3} imply
the equality
\begin{align}
\label{eq:Lid1}
\calL_{2,q^{d+1}}(x^{d+1})
=
\sum_{s=0}^d
\calL_{2,q}(\omega^s x),
\end{align}
which is regarded as
the $q$-analogue of \eqref{eq:Lid2}.
The equality \eqref{eq:Lid1} is also  obtained  directly from  
\eqref{eq:L1} and the equality
\begin{align}
\sum_{s=0}^d \omega^{sn}
=
\begin{cases}
d+1 & n \equiv 0 \mod d+1\\
0 & n \not\equiv 0 \mod d+1.\\
\end{cases}
\end{align}
\end{ex}

\begin{ex}
Let us consider the case where $F=\mathbb{C}$
with arbitrary coefficients $\bfz$.
Let us introduce the {\em dilogarithm of degree $d$ with coefficients $\bfz$}
by the integral,
\begin{align}
\mathrm{Li}_{2;d,\bfz}(x)=
-\int_0^{-x} \log\Biggl(
\sum_{s=0}^d
z_s y^s
\Biggr)
\frac{dy}y.
\end{align}
Then,
by the same calculation as in \eqref{eq:asym2},
we have the following asymptotic behavior,
\begin{align}
\label{eq:asym3}
\begin{split}
\log \Psi_{d,\bfz,q}(x) 
&=\frac{-1}{1-q^2}
\sum_{m=0}^{\infty}
\log
\Biggl(
\sum_{s=0}^d
z_s (q^{2m+1}x)^s
\Biggr)
\frac{q^{2m+1}x - q^{2m+3}x}{q^{2m+1}x}\\ 
&\sim
\frac{1}{1-q^2}\mathrm{Li}_{2;d,\bfz}(-x) 
\quad
 (q\rightarrow 1^-).
\end{split}
\end{align}
\end{ex}

\section{Generalized mutations of quantum $Y$-seeds}
In this section, following the idea of
\cite{Fock03,Fock07},
 we introduce the quantum version of the
generalized mutation  of generalized cluster algebras.
Here, we use the formulation of generalized cluster algebras
by \cite{Nakanishi14a}.

Let $B=(b_{ij})_{i,j=1}^n$ be a skew-symmetrizable integer matrix.
Let $\bfd=(d_1,\dots,d_n)$ be an $n$-tuple of
positive integers.
For given $B$ and $\bfd$,
we arbitrarily choose an   $n$-tuple of
positive integers
$\bfr=(r_1,\dots,r_n)$ 
 such that
\begin{align}
\label{eq:skew1}
r_i d_i b_{ij} = - r_jd_j b_{ji}.
\end{align}
Such an $\bfr$ exists (not uniquely) due to the skew-symmetrizable property of the matrix $B$.
Let $q$ continue to be a formal variable, and
let $Y=(Y_i)_{i=1}^n$  be an $n$-tuple of
noncommutative formal variables with
commutation relation
\begin{align}
\label{eq:Ycom1}
Y_i Y_j = q^{2r_j d_j b_{ji}} Y_j Y_i.
\end{align}
The relation \eqref{eq:Ycom1} makes sense
due to the skew-symmetric property in \eqref{eq:skew1}.
We call such a pair $(B,Y)$ a {\em quantum $Y$-seed}.

We use the notation
\begin{align}
q_i:= q^{r_i d_i},
\quad
i=1,\dots,n.
\end{align}
Then, \eqref{eq:Ycom1} is also written as
\begin{align}
\label{eq:Ycom3}
Y_i Y_j = q_j^{2 b_{ji}} Y_j Y_i
=q_i^{-2 b_{ij}} Y_j Y_i.
\end{align}

Let $F$ be any field.
For the above $\bfd=(d_1,\dots,d_n)$ we arbitrarily choose
a collection of elements in $F$,
\begin{align}
\bfz=(z_{i,s})_{i=1,\dots,n; s=1,\dots, d_i-1}
\end{align}
satisfying the {\em reciprocity condition\/} in \cite{Nakanishi14a}
\begin{align}
z_{i,s} = z_{i,d_i-s}.
\end{align}
(The use of symbol $\bfz$ here slightly conflicts with the  one in
Definition \ref{defn:dilog1}, but we find that it is convenient.)
Let us set $z_{i,0}=z_{i,d_i}=1$.
We also introduce the notation
\begin{align}
\bfz_i = (z_{i,s})_{s=1,\dots,d_i-1},
\quad
i=1,\dots,n.
\end{align}
Under these notations we have the associated
 quantum dilogarithm 
$\Psi_{d_i,\bfz_i,q_i}(x)$ of  degree $d_i$
for each $i=1,\dots,n$.

Below we assume that any element in $F$ commutes with variables $Y_i$.

\begin{defn}
For a quantum $Y$-seed $(B,Y)$,
the {\em $(\bfd,\bfz)$-mutation (generalized mutation) $(B',Y')=\mu_{k}(B,Y)$ of
$(B,Y)$ at $k$} is defined by
\begin{align}
\label{eq:bmut1}
b'_{ij}&=
\begin{cases}
-b_{ij} & \text{$i=k$ or $j=k$}\\
b_{ij} + d_k([-b_{ik}]_+  b_{kj}
+b_{ik}[b_{kj}]_+)
& i,j \neq k,
\end{cases}
\\
\label{eq:Ymut1}
Y'_{i}
&=
\begin{cases}
Y_k^{-1} & i = k\\
\displaystyle
q_i^{b_{ik} d_k [\ve b_{ki}]_+}
Y_i Y_k^{d_k[\ve b_{ki}]_+}\\
\displaystyle
\qquad
\times \prod_{m=1}^{|b_{ki}|}
\left(
\sum_{s=0}^{d_k}
z_{k,s}
q_k^{-\ve \sgn(b_{ki})  (2m-1) s}
Y_k^{\ve s}
\right)^{-\sgn(b_{ki})}
& i \neq k,
\end{cases}
\end{align}
where $\ve=\pm$, and
\begin{align}
[a]_+=
\begin{cases}
a & a > 0\\
0 & a \leq 0.
\end{cases}
\end{align}
Actually, the right hand side of \eqref{eq:Ymut1} does not depend on
the choice of the sign $\ve$ (see Lemma \ref{lem:mut1} (i)).
We call $\bfd$  and $\bfz$ the {\em mutation degrees}
and  the {\em frozen coefficients}, respectively, in accordance with
\cite{Nakanishi14a}.
\end{defn}

When we formally set $q=1$, 
the relation \eqref{eq:Ymut1}
reduces to
\begin{align}
\label{eq:ymut1}
Y'_i&=
\begin{cases}
\displaystyle
Y_k^{-1}
 & i=k\\
 \displaystyle
Y_i
Y_k^{d_k[\ve {b}_{ki}]_+} 
\Biggl(
\sum_{s=0}^{d_k}
z_{k,s} Y_k^{\ve s}
\Biggr)^{-{b}_{ki}}
& i\neq k,\\
\end{cases}
\end{align}
which is
 the
generalized mutation of coefficients ($y$-variables)
in generalized cluster algebras formulated in
\cite{Nakanishi14a}.
On the other hand, 
when we set $d_k=1$, it reduces to the  ordinary mutation of 
quantum 
$Y$-seeds by Fock and Goncharov \cite{Fock03,Fock07}.

The following properties are easily checked.
\begin{lem}
\label{lem:mut1}
(i) The right hand side of \eqref{eq:Ymut1} does not depend on
the choice of the sign $\ve$.
\par
(ii) For the matrix $B'$, the condition
\begin{align}
r_i d_i b'_{ij} = - r_jd_j b'_{ji}.
\end{align}
holds.
\par
(iii) The $(\bfd,\bfz)$-mutation is involutive,
i.e., $\mu_k(\mu_k(B,Y))=(B,Y)$.
\end{lem}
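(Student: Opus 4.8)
The plan is to treat the three parts in order, since (i) and (ii) feed into the proof of (iii). Throughout I write $\beta_i := r_i d_i$, so that $q_i = q^{\beta_i}$ and the skew-symmetrizability \eqref{eq:skew1} reads $\beta_i b_{ij} = -\beta_j b_{ji}$, and I abbreviate the $m$-th factor of the quantum-dilogarithm product in \eqref{eq:Ymut1} by
\begin{align*}
S_m^{\ve} = \sum_{s=0}^{d_k} z_{k,s}\, q_k^{-\ve\sgn(b_{ki})(2m-1)s}\, Y_k^{\ve s}.
\end{align*}
Each $S_m^\ve$ is a Laurent polynomial in $Y_k$ alone; this commutativity is what ultimately drives all the cancellations.

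For part (i) I would compare the $\ve=+$ and $\ve=-$ forms factor by factor. Substituting $s\mapsto d_k-s$ in $S_m^-$ and invoking the reciprocity condition $z_{k,s}=z_{k,d_k-s}$ rewrites it as $S_m^- = q_k^{\sgn(b_{ki})(2m-1)d_k}\,Y_k^{-d_k}\,S_m^+$. Since $Y_k^{-d_k}$ commutes with $S_m^+$, raising to the power $-\sgn(b_{ki})$ and multiplying over $m=1,\dots,|b_{ki}|$ turns the entire $\ve=-$ product into $q_k^{-d_k b_{ki}^2}\,Y_k^{d_k b_{ki}}$ times the $\ve=+$ product (using $\sum_m (2m-1)=|b_{ki}|^2$ and $\sgn(b_{ki})|b_{ki}|=b_{ki}$). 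It then remains to check that the prefactors $q_i^{b_{ik}d_k[\ve b_{ki}]_+}Y_iY_k^{d_k[\ve b_{ki}]_+}$ absorb this discrepancy: the identity $[b_{ki}]_+ - [-b_{ki}]_+ = b_{ki}$ handles the $Y_k$-powers, and the leftover scalar matches precisely because $\beta_i b_{ik} = -\beta_k b_{ki}$ by \eqref{eq:skew1}. This is the one place where both the reciprocity condition and skew-symmetrizability are genuinely needed.

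Part (ii) is a direct substitution of \eqref{eq:bmut1}. For $i=k$ or $j=k$ the claim is immediate from $b'_{ij}=-b_{ij}$ and the hypothesis on $B$. For $i,j\neq k$, writing out $\beta_i b'_{ij}+\beta_j b'_{ji}$, the leading terms cancel by the hypothesis, and the correction term splits into four summands. Using positivity of the $\beta$'s to pull them inside $[\cdot]_+$ (so that $\beta_i[-b_{ik}]_+ = \beta_k[b_{ki}]_+$, and likewise for the others) together with $\beta_i b_{ik}=-\beta_k b_{ki}$ and $\beta_j b_{jk}=-\beta_k b_{kj}$, the four summands cancel in two pairs.

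Part (iii) is the main content. The matrix identity $\mu_k\mu_k B=B$ follows by applying \eqref{eq:bmut1} twice and using $[a]_+-[-a]_+=a$ to see the doubled correction term vanishes, while $Y''_k=(Y'_k)^{-1}=Y_k$ is immediate. For $i\neq k$ the key idea is to exploit the sign freedom from part (i): perform the first mutation with sign $-\ve$ and the second with sign $\ve$. Since part (ii) guarantees that $B'$ is skew-symmetrized by the same $\bfr,\bfd$ (so $q'_i=q_i$), and since $b'_{ki}=-b_{ki}$, $b'_{ik}=-b_{ik}$, $Y'_k=Y_k^{-1}$, the quantum-dilogarithm product produced by the second mutation becomes exactly the inverse of the one appearing in $Y'_i$. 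Substituting the explicit $Y'_i$ and $Y'_k=Y_k^{-1}$ back into \eqref{eq:Ymut1} keeps everything inside the original algebra generated by $Y_i,Y_k$: the scalar prefactors cancel, the $Y_k$-powers telescope, and the two products cancel because they are functions of $Y_k$ alone and hence commute past the intervening power of $Y_k$, so that only $Y_i$ survives. The main obstacle is precisely this last cancellation: it works only if the two dilogarithm products are genuine inverses, which is why the sign choice licensed by part (i) — rather than a fixed convention — is indispensable, and one must track the noncommutative ordering carefully throughout.
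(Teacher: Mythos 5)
Your proposal is correct. The paper gives no actual proof of this lemma (it only asserts the properties are ``easily checked''), and your argument supplies precisely the intended routine verification: for (i), the substitution $s\mapsto d_k-s$ plus reciprocity $z_{k,s}=z_{k,d_k-s}$ gives $S_m^-=q_k^{\sgn(b_{ki})(2m-1)d_k}Y_k^{-d_k}S_m^+$, and the resulting discrepancy $q_k^{-d_kb_{ki}^2}Y_k^{d_kb_{ki}}$ is absorbed by the monomial prefactors via $[b_{ki}]_+-[-b_{ki}]_+=b_{ki}$ and $r_id_ib_{ik}=-r_kd_kb_{ki}$; for (ii), the identity $r_id_i[-b_{ik}]_+=r_kd_k[b_{ki}]_+$ makes the four correction terms cancel in pairs; and for (iii), running the second mutation with the opposite sign turns its dilogarithm product into the exact inverse of the first one, after which the scalars and $Y_k$-powers cancel because everything in sight except $Y_i$ is a function of $Y_k$. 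One minor quibble: your remark that the sign freedom of (i) is \emph{indispensable} for (iii) overstates the point --- one could fix the same sign for both mutations and still cancel by reusing the reciprocity rewriting from (i), just with more bookkeeping --- but this does not affect the correctness of your proof.
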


In the rest of the section,
we will justify the relation  \eqref{eq:Ymut1}
as a ``good" quantization of 
the classical one \eqref{eq:ymut1}
in the sense of \cite{Fock03,Fock07}.

To start, let us consider
\begin{align}
\Ad(\Psi_{d_k,\bfz_k,q_k}(Y_k^{\ve}))^{\ve}(Y_i)
:=&\ 
\Psi_{d_k,\bfz_k,q_k}(Y_k^{\ve})^{\ve} Y_i
\Psi_{d_k,\bfz_k,q_k}(Y_k^{\ve})^{-\ve},
\end{align}
which we call the {\em adjoint action\/} of
$\Psi_{d_k,\bfz_k,q_k}(Y_k^{\ve})$
on quantum $Y$-variables.

The following is the key formula which connects
the  generalized mutation of quantum $Y$-seeds
and the quantum dilogarithms of higher degrees.

\begin{lem}
\label{lem:ad1}
\begin{align}
\label{eq:ad1}
\begin{split}
\Ad(\Psi_{d_k,\bfz_k,q_k}(Y_k^{\ve}))^{\ve}(Y_i)
=Y_i
 \prod_{m=1}^{|b_{ki}|}
\left(
\sum_{s=0}^{d_k}
z_{k,s}q_k^{-\ve \sgn(b_{ki})  (2m-1) s}
Y_k^{\ve s}
\right)^{-\sgn(b_{ki})}.
\end{split}
\end{align}
\end{lem}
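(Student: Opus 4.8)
The plan is to reduce the whole computation to the conjugation behaviour of a formal power series in $Y_k$, combined with the recursion of Proposition \ref{prop:rec1}. Throughout I abbreviate $\Psi=\Psi_{d_k,\bfz_k,q_k}$ and set $X=Y_k^{\ve}$. First I would record how $Y_i$ commutes with powers of $Y_k$: iterating \eqref{eq:Ycom3} gives $Y_iY_k^{m}=q_k^{2mb_{ki}}Y_k^{m}Y_i$, so conjugation by $Y_i$ merely rescales $X$ by a scalar,
\[
Y_iXY_i^{-1}=q_k^{2\ve b_{ki}}X.
\]
Since $\Psi(X)$ and $\Psi(X)^{-1}$ are honest formal power series in $X$ (hence in $Y_k$) with constant term $1$, this scaling propagates to any such series: for every $f$ one has $f(X)Y_i=Y_i\,f(q_k^{-2\ve b_{ki}}X)$, because $Y_i^{-1}XY_i=q_k^{-2\ve b_{ki}}X$.

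Applying this with $f=\Psi^{\ve}$ carries the left factor across $Y_i$ and puts $Y_i$ out in front, matching the shape of the target formula:
\[
\Psi(X)^{\ve}\,Y_i\,\Psi(X)^{-\ve}
=Y_i\,\Psi(q_k^{-2\ve b_{ki}}X)^{\ve}\,\Psi(X)^{-\ve}.
\]
It then remains to identify the trailing factor. Here I would use that $\Psi(q_k^{-2\ve b_{ki}}X)$ and $\Psi(X)$ are both power series in $Y_k$ and therefore commute, so the $\ve$-th power distributes and I only need to evaluate $\Psi(q_k^{2a}X)\Psi(X)^{-1}$ with $a=-\ve b_{ki}$. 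This is precisely Proposition \ref{prop:rec1}:
\[
\Psi(q_k^{2a}X)\Psi(X)^{-1}
=\prod_{m=1}^{|a|}\Biggl(\sum_{s=0}^{d_k}z_{k,s}\,q_k^{\sgn(a)(2m-1)s}X^{s}\Biggr)^{\sgn(a)}.
\]

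The final step is purely sign bookkeeping. With $a=-\ve b_{ki}$ we have $|a|=|b_{ki}|$ and $\sgn(a)=-\ve\,\sgn(b_{ki})$, whence $\ve\,\sgn(a)=-\sgn(b_{ki})$ and $\sgn(a)(2m-1)s=-\ve\,\sgn(b_{ki})(2m-1)s$; substituting $X^{s}=Y_k^{\ve s}$ turns the displayed product, raised to the power $\ve$ (again using that the factors commute), into exactly the right-hand side of \eqref{eq:ad1}. I expect the only genuine point needing care to be the tracking of the two independent signs $\ve$ and $\sgn(b_{ki})$, together with the orientation in which $Y_i$ is transported through $\Psi$; the degenerate case $i=k$ needs no separate argument, since $b_{kk}=0$ makes the empty product equal to $1$ and both sides collapse to $Y_k$.
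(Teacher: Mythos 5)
Your proof is correct and takes essentially the same route as the paper: conjugate $Y_i$ through $\Psi_{d_k,\bfz_k,q_k}$ using the commutation relation \eqref{eq:Ycom3}, which rescales the argument by $q_k^{-2\ve b_{ki}}$, and then identify the resulting ratio of quantum dilogarithms via Proposition \ref{prop:rec1}. The only difference is cosmetic: you handle both signs $\ve=\pm$ uniformly through the substitution $X=Y_k^{\ve}$, $a=-\ve b_{ki}$, whereas the paper computes the case $\ve=+$ and notes that $\ve=-$ is analogous.
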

\begin{proof}
For example, in the case $\ve=+$,
\begin{align}
\begin{split}
\Psi_{d_k,\bfz_k,q_k}(Y_k) Y_i
\Psi_{d_k,\bfz_k,q_k}(Y_k)^{-1}
&=
Y_i
\Psi_{d_k,\bfz_k,q_k}(q_k^{-2 b_{ki}}Y_k) 
\Psi_{d_k,\bfz_k,q_k}(Y_k)^{-1}\\
&=
 Y_i
 \prod_{m=1}^{|b_{ki}|}
\left(
\sum_{s=0}^{d_k}
z_{k,s}
q_k^{- \sgn(b_{ki})   (2m-1) s}
Y_k^{ s}
\right)^{-\sgn(b_{ki}) },
\end{split}
\end{align}
where we used 
\eqref{eq:Ycom1} and
Proposition \ref{prop:rec1} in the first and second equalities,
respectively.
The case $\ve=-$ can be shown in the same way.
\end{proof}

The right hand side of \eqref{eq:ad1}, excluding the factor $Y_i$,
 is a part of
 \eqref{eq:Ymut1},
 and for $d_k=1$ it is called the ``automorphism part" of  \eqref{eq:Ymut1}
in \cite{Fock03,Fock07}.

Next let us consider the ``monomial part" of 
 \eqref{eq:Ymut1}.
Let us set
\begin{align}
\label{eq:Ymut2}
Z^{(\ve)}_{i}:=
\begin{cases}
Y_k^{-1} & i = k\\
\displaystyle
q_i^{b_{ik} d_k [\ve b_{ki}]_+}
Y_i Y_k^{d_k[\ve b_{ki}]_+}& i \neq k.
\end{cases}
\end{align}
By Lemma \ref{lem:ad1},
the $(\bfd,\bfz)$-mutation \eqref{eq:Ymut1} is
expressed as the composition
\begin{align}
\label{eq:YZ1}
Y'_i=\Ad(\Psi_{d_k,\bfz_k,q_k}(Y_k^{\ve}))^{\ve}(Z_i^{(\ve)}).
\end{align}

\begin{lem}
\label{lem:Z1}
The following commutation relation holds.
\begin{align}
\label{eq:Zcom2}
Z^{(\ve)}_i Z^{(\ve)}_j = q^{2r_jd_j b'_{ji}} Z^{(\ve)}_j Z^{(\ve)}_i,
\end{align}
where $b'_{ij}$ is given by \eqref{eq:bmut1}.
\end{lem}
\begin{proof}
This is easily verified by the case check.
\end{proof}

\begin{prop}
\label{prop:com1}
Under the $(\bfd,\bfz)$-mutation in \eqref{eq:Ymut1},
the following commutation relation holds:
\begin{align}
\label{eq:Ycom2}
Y'_i Y'_j = q^{2r_jd_j b'_{ji}} Y'_j Y'_i.
\end{align}
\end{prop}
\begin{proof}
By Lemma \ref{lem:Z1} and 
\eqref{eq:YZ1},
we have
\begin{align}
\begin{split}
Y'_i Y'_j &=
\Ad(\Psi_{d_k,\bfz_k,q_k}(Y_k^{\ve}))^{\ve}(Z_i^{(\ve)}Z_j^{(\ve)})\\
&=
\Ad(\Psi_{d_k,\bfz_k,q_k}(Y_k^{\ve}))^{\ve}
(
q^{2r_jd_j b'_{ji}} Z^{(\ve)}_j Z^{(\ve)}_i)\\
&=
q^{2r_jd_j b'_{ji}} Y'_j Y'_i.
\end{split}
\end{align}
\end{proof}
Lemmas \ref{lem:ad1}, \ref{lem:Z1},
and Proposition \ref{prop:com1}
naturally extend
 the  fundamental properties
of the mutation of quantum $Y$-seeds in \cite{Fock03,Fock07}.

\section{Quantum dilogarithm identities of higher degrees}
Let us give an application of generalized mutations of quantum $Y$-seeds
to quantum dilogarithm identities of higher degrees.
Since they are parallel to the one for ordinary quantum dilogarithm identities
studied in \cite{Keller11,Kashaev11},
we only give the minimal description here.
We ask the reader to consult \cite[Section 3]{Kashaev11} for more details.

Consider a sequence of $(\bfd,\bfz)$-mutations  of quantum $Y$-seeds,
\begin{align}
\label{eq:seq1}
(B(1),Y(1))
{\buildrel \mu_{k_1} \over \leftrightarrow }
(B(2),Y(2))
{\buildrel \mu_{k_2} \over \leftrightarrow }
\cdots
{\buildrel \mu_{k_L} \over \leftrightarrow }
(B(L+1),Y(L+1)),
\end{align}
and suppose  that it has the periodicity
\begin{align}
\label{eq:period1}
b_{\sigma(i)\sigma(j)}(L+1)
=
b_{ij}(1),
\quad
Y_{\sigma(i)}(L+1)=
Y_{i}(1)
\end{align}
for some permutation $\sigma$ of $1,\dots,n$.
%{\em Very unfortunately, we do not know any nontrivial example of 
%such a periodicity involving mutation degrees $\bfd\neq (1,\dots,1)$ so far.}
%Nevertheless, suppose that there is a sequence having such a periodicity.
Then, we have the associated sequence of $(\bfd,\bfz)$-mutations of
(nonquantum) $Y$-seeds
of a (nonquantum) generalized cluster algebra,
\begin{align}
\label{eq:seq2}
(B(1),y(1))
{\buildrel \mu_{k_1} \over \leftrightarrow }
(B(2),y(2))
{\buildrel \mu_{k_2} \over \leftrightarrow }
\cdots
{\buildrel \mu_{k_L} \over \leftrightarrow }
(B(L+1),y(L+1)),
\end{align}
and it has the same periodicity
\begin{align}
\label{eq:period2}
y_{\sigma(i)}(L+1)=
y_{i}(1).
\end{align}
Let us further assume the {\em sign-coherence property\/} of the sequence
\eqref{eq:seq2} (see, .e.g., \cite{Nakanishi14a}).
Let $\ve_t$ and $c_t$ ($t=1,\dots,L$) be the {\em tropical sign\/}
and the {\em $c$-vector\/} of $y_{k_t}(t)$ defined in \cite{Nakanishi14a}.
Let us denote the initial seed $(B(1),Y(1))$ as $(B,Y)$.
Let $\bbT(B)$ be the {\em quantum torus\/}
generated by noncommutative variables $Y^{\alpha}$ ($\alpha\in \bbZ^n$)
with the relations
\begin{align}
q^{\langle \alpha, \beta\rangle} Y^{\alpha} Y^{\beta}
=Y^{\alpha+\beta},
\quad
\langle \alpha, \beta\rangle
= \sum_{i,j=1}^n
\alpha_i r_i d_i b_{ij} \beta_j.
\end{align}
We identify $Y_i = Y^{e_i}$, where $e_i$ is the $i$th unit vector.

\begin{thm}
Under the assumption of the periodicity \eqref{eq:period1}
and the sign-coherence property
of the sequence \eqref{eq:seq2},
we have the following identities of
the quantum dilogarithms of higher degrees
associated to the sequence \eqref{eq:seq1}.
\par
(i) Quantum dilogarithm identities in tropical form
(cf.~\cite[Theorem 3.5]{Kashaev11}).
\begin{align}
\label{eq:id1}
\Psi_{d_{k_1},\bfz_{k_1},q_{k_1}}(Y^{\ve_1c_1})^{\ve_1}
\cdots
\Psi_{d_{k_L},\bfz_{k_L},q_{k_L}}(Y^{\ve_1c_L})^{\ve_L}
=1,
\end{align}
where $Y^{\ve_tc_t}\in \bbT(B)$.
\par
(ii) Quantum dilogarithm identities in universal form
(cf.~\cite[Corollary 3.7]{Kashaev11}).
\begin{align}
\label{eq:id2}
\Psi_{d_{k_L},\bfz_{k_L},q_{k_L}}(Y_{k_L}(L))^{\ve_L}
\cdots
\Psi_{d_{k_1},\bfz_{k_1},q_{k_1}}(Y_{k_1}(1))^{\ve_1}
=1.
\end{align}
\end{thm}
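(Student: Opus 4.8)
The plan is to mirror the well-understood ordinary case (as in \cite{Keller11,Kashaev11}), where the two identities are really two faces of a single statement about the composition of mutations acting on the quantum torus. The central observation, already prepared by the preceding section, is the decomposition \eqref{eq:YZ1}: each $(\bfd,\bfz)$-mutation $\mu_k$ factors as an adjoint (automorphism) part $\Ad(\Psi_{d_k,\bfz_k,q_k}(Y_k^{\ve}))^{\ve}$ composed with a monomial part $Z^{(\ve)}$. The key is that the monomial part is precisely the \emph{tropical} (classical, $q$-free) mutation of the exponent lattice, governed by the tropical signs $\ve_t$ and realized inside $\bbT(B)$, while the dilogarithm factors carry all the quantum content. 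So first I would express the entire composite mutation along the sequence \eqref{eq:seq1} as an ordered product of adjoint-by-$\Psi$ operators interleaved with monomial transformations.

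Second, I would pull all the monomial parts to one side. Concretely, using the sign-coherence property one writes each intermediate variable $Y_{k_t}(t)$ as a monomial $Y^{\ve_t c_t}$ in the \emph{initial} quantum torus $\bbT(B)$, up to the tropical mutation of the $c$-vectors; this is exactly the content that lets one rewrite the universal-form product into the tropical-form product. The periodicity hypothesis \eqref{eq:period1} says the composite mutation returns the seed to itself (after applying $\sigma$), so the full composite of automorphism-plus-monomial operators acts as the identity on $\bbT(B)$. Since the monomial (tropical) parts already compose to the correct permutation $\sigma$ by \eqref{eq:period2}, the remaining product of the $\Psi$-adjoint operators must act trivially. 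An adjoint action $\Ad(F)=1$ on the whole quantum torus forces $F$ to be central, hence (by a standard argument, since these $\Psi$'s are nonconstant power series in noncentral variables) a scalar, and matching the normalization $\Psi_{d,\bfz,q}(0)=1$ from \eqref{eq:rec2} pins that scalar to $1$. This yields \eqref{eq:id1}. Statement (ii) then follows by conjugating \eqref{eq:id1} stepwise: transporting each factor $\Psi_{d_{k_t},\bfz_{k_t},q_{k_t}}(Y^{\ve_t c_t})$ back through the accumulated monomial mutations converts its argument from the initial-torus monomial $Y^{\ve_t c_t}$ into the local variable $Y_{k_t}(t)$, reversing the order of the product.

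I would organize the write-up so that Lemma~\ref{lem:ad1}, Lemma~\ref{lem:Z1}, and Proposition~\ref{prop:com1} do the heavy lifting: the first identifies the automorphism part, while the latter two guarantee that the monomial parts are genuine algebra maps on $\bbT(B)$ respecting the mutated commutation relations, so that the factorization \eqref{eq:YZ1} composes coherently without spurious $q$-powers. The tropical bookkeeping — verifying that $Y_{k_t}(t)=Y^{\ve_t c_t}$ holds in $\bbT(B)$ and that the $c$-vectors evolve by tropical mutation — is where I would lean on \cite{Nakanishi14a} for the sign-coherence and $c$-vector theory, since these statements are already established for nonquantum generalized $Y$-seeds and the monomial part of the quantum mutation is $q$-insensitive by construction.

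The main obstacle I anticipate is the last scalar-matching step: deducing that a product of $\Psi$-adjoint operators equal to the identity actually forces the product of the $\Psi$'s themselves (not merely their adjoint actions) to equal $1$. In the ordinary case this rests on the fact that $\Ad(F)=\mathrm{id}$ implies $F$ is central, together with the quantum torus having trivial center beyond scalars for generic $q$; I would need to confirm this centrality argument survives for the higher-degree dilogarithms, whose factors are more complicated power series $\sum_s z_{k,s} q_k^{\cdots} Y_k^{\ve s}$ rather than simple binomials. Fortunately the factorization Proposition~\ref{prop:fac1} suggests that, at least when the defining polynomials split, each $\Psi_{d,\bfz,q}$ is a finite product of ordinary $\Psi_q$'s, so the classical centrality argument applies factorwise; handling the general (nonsplit) case may require working formally with the recursion \eqref{eq:rec3} and the normalization \eqref{eq:rec2} directly rather than via an explicit factorization.
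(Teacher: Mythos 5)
Your outline follows the same route as the proof the paper points to: the paper omits its own argument, deferring to \cite[Theorem 3.5 and Corollary 3.7]{Kashaev11}, and that argument is exactly your plan --- factor each mutation via \eqref{eq:YZ1} into the adjoint of a dilogarithm and a monomial part, push the monomial parts to one side so that, by sign-coherence and the $c$-vector calculus of \cite{Nakanishi14a}, the dilogarithm arguments become the tropical monomials $Y^{\ve_t c_t}$, use the periodicity \eqref{eq:period1} together with the classical periodicity \eqref{eq:period2} to get that the product of adjoints is the identity, conclude that the product $\Lambda$ of the dilogarithm factors in \eqref{eq:id1} is central and hence equal to $1$, and then derive (ii) from (i). The generalized ingredients the paper prepares (Proposition \ref{prop:rec1}, Lemma \ref{lem:ad1}, Lemma \ref{lem:Z1}, Proposition \ref{prop:com1}) are precisely what makes this transplantation work.

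Two corrections, both at the places you yourself flagged as delicate. First, the passage from (i) to (ii) is not conjugation ``through the accumulated monomial mutations'': writing $\Lambda_t$ for the partial product of the first $t$ factors in \eqref{eq:id1}, the relevant identity is $Y_{k_t}(t)=\Ad(\Lambda_{t-1})(Y^{\ve_t c_t})$, i.e., one conjugates by the accumulated \emph{dilogarithm} (automorphism) factors; the monomial parts enter only in proving this identity. It then gives $\Psi_{d_{k_t},\bfz_{k_t},q_{k_t}}(Y_{k_t}(t))^{\ve_t}=\Lambda_t\Lambda_{t-1}^{-1}$, so the reversed product \eqref{eq:id2} telescopes to $\Lambda_L=1$. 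Second, your worry about the centrality step, and the proposed fix via Proposition \ref{prop:fac1}, are misdirected: $\Ad(F)=\mathrm{id}$ forces $F$ to be central for \emph{any} invertible $F$ --- this is a tautology, completely insensitive to whether $F$ is built from binomial or higher-degree dilogarithms --- and whether ``central implies scalar'' holds is a property of the quantum torus alone, namely nondegeneracy of the pairing $\langle\cdot,\cdot\rangle$, i.e., of the matrix $(d_i b_{ij})$. So factorizing each $\Psi_{d,\bfz,q}$ into ordinary $\Psi_q$'s (which would anyway require passing to a splitting field of $F$) buys nothing here. The genuine caveat in this step, already present in the ordinary case and independent of the degrees $d_i$, is a degenerate $B$: then the center of $\bbT(B)$ is strictly larger than the scalars (it contains $Y^\alpha$ for $\alpha$ in the radical of the form), and one needs either a nondegeneracy hypothesis or an extension of the lattice/form to finish. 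Once $\Lambda$ is known to be scalar, the normalization \eqref{eq:rec2} pins it to $1$, exactly as you say.
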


We omit the proof,
since it is completely parallel to
the one for Theorem 3.5 and Corollary 3.7 of \cite{Kashaev11}.

\begin{ex}
Let us consider the simplest nontrivial example of a generalized cluster algebra
with
\begin{align}
B=\begin{pmatrix}
0 & -1\\
1 & 0\\
\end{pmatrix},
\quad
\bfd=(2,1),
\quad
\bfz=(z_{1,1}).
\end{align}
This example was studied in \cite[Section 2.3]{Nakanishi14a}
for the nonquantum case.
Now let us choose 
\begin{align}
\bfr=(1,2).
\end{align}
Thus, we have $q_1=q_2=q^2$, and
the commutation relation for $Y=(Y_1,Y_2)$ is given by
\begin{align}
Y_1Y_2=q^4 Y_2Y_1.
\end{align}
Let us set $(B(1),Y(1)):=(B,Y)$ and consider the following sequence of mutations
\begin{align}
\label{eq:seq3}
\begin{split}
(B(1),Y(1))\
{\buildrel \mu_{1} \over \leftrightarrow }\
(B(2),Y(2))\
&{\buildrel \mu_{2} \over \leftrightarrow }\
(B(3),Y(3))\
{\buildrel \mu_{1} \over \leftrightarrow }
(B(4),Y(4))\\
{\buildrel \mu_{2} \over \leftrightarrow }\
(B(5),Y(5))\
&{\buildrel \mu_{1} \over \leftrightarrow }\
(B(6),Y(6))\
{\buildrel \mu_{2} \over \leftrightarrow }\
(B(7),Y(7)).
\end{split}
\end{align}
Then, we have
\begin{align}
B(t)=(-1)^{t+1} B,
\end{align}
and the quantum $Y$-variables mutate as follows,
where we set $z=z_{1,1}$ for simplicity.
(If we set $q=1$, we recover the result in
\cite[Table 1]{Nakanishi14a} for the nonquantum case.)
\begin{align}
&
\begin{cases}
Y_1(1)=Y_1\\
Y_2(1)=Y_2,\\
\end{cases}
\\
&
\begin{cases}
Y_1(2)=Y_1^{-1}\\
Y_2(2)=Y_2(1+zq^2 Y_1 + q^4 Y_1^2),\\
\end{cases}
\\
&
\begin{cases}
Y_1(3)=Y_1^{-1}(1+q^2Y_2 + zY_1Y_2 + q^{-2}Y_1^2 Y_2)\\
Y_2(3)=Y_2^{-1}(1+zq^{-2} Y_1 + q^{-4} Y_1^2)^{-1},\\
\end{cases}
\\
\allowbreak
&
\begin{cases}
Y_1(4)=Y_1(1+q^{-2}Y_2 + zq^{-4}Y_1Y_2 + q^{-6}Y_1^2 Y_2)^{-1}\\
Y_2(4)=q^{-4}Y_1^{-2}Y_2^{-1}
(1+q^2 Y_2 + q^6 Y_2 + q^8 Y_2^2\\
\qquad\qquad\qquad
+z Y_1Y_2 +zq^2 Y_1 Y_2^2
+q^{-4} Y_1^2 Y_2^2
),\\
\end{cases}
\\
\allowbreak
&
\begin{cases}
Y_1(5)=q^{-2}Y_1^{-1}Y_2^{-1}
(1+q^{2} Y_2)\\
Y_2(5)=q^{-4}Y_1^2 Y_2
(1+q^{-6} Y_2 + q^{-2} Y_2 + q^{-8} Y_2^2\\
\qquad\qquad\qquad
+z q^{-4}Y_1Y_2 +zq^{-10} Y_1 Y_2^2
+q^{-10} Y_1^2 Y_2^2
)^{-1},\\
\end{cases}
\\
\allowbreak
&
\begin{cases}
Y_1(6)=q^{-2}Y_1Y_2
(1+q^{-2} Y_2)\\
Y_2(6)=Y_2^{-1},\\
\end{cases}
\\
\allowbreak
&
\begin{cases}
Y_1(7)=Y_1\\
Y_2(7)=Y_2.\\
\end{cases}
\end{align}
Among them, the calculation of $Y_2(4)$ is the most tedious one.
Now we observe the periodicity of the sequence
\eqref{eq:seq3} with $\sigma=\mathrm{id}$.
We also have the following data of the tropical signs and
the $c$-vectors in \cite[Section 3.4]{Nakanishi14a}
\begin{align}
&\ve_1=\ve_2=+,
\quad
\ve_3=\ve_4=\ve_5=\ve_6=-,\\
\begin{split}
&c_1=(1,0),\
c_2=(0,1),\
c_3=(-1,0),\\
&c_4=(-2,-1),\
c_5=(-1,-1),\
c_6=(0,-1),
\end{split}
\end{align}
which can be also read off from the above result by setting $q=1$.
Now, by substituting these data, the identity \eqref{eq:id1} reads
\begin{align}
\label{eq:id3}
\begin{split}
&\Psi_{2,(z),q^2}(Y_1)
\Psi_{q^2}(Y_2)
\Psi_{2,(z),q^2}(Y_1)^{-1}\\
&\qquad
\times
\Psi_{q^2}(q^{-4}Y_1^2Y_2)^{-1}
\Psi_{2,(z),q^2}(q^{-2}Y_1Y_2)^{-1}
\Psi_{q^2}(Y_2)^{-1}
=1,
\end{split}
\end{align}
while the identity \eqref{eq:id2} reads
\begin{align}
\label{eq:id4}
\begin{split}
&
\Psi_{q^2}(Y_2)^{-1}
\Psi_{2,(z),q^2}((1+q^2Y_2)^{-1}q^2Y_2Y_1)^{-1}
\\
&
\times\Psi_{q^2}(
(1+q^2 Y_2 + q^6 Y_2 + q^8 Y_2^2
%\\
%&\qquad\qquad\qquad
+z Y_1Y_2 +zq^2 Y_1 Y_2^2
+q^{-4} Y_1^2 Y_2^2
)^{-1}
q^{4} Y_2Y_1^2)^{-1}
\\
&
\times
\Psi_{2,(z),q^2}((1+q^2Y_2 +zY_1Y_2 + q^{-2}Y_1^2Y_2)^{-1}Y_1)^{-1}
\\
&
\times
\Psi_{q^2}(Y_2(1+zq^2Y_1+q^4Y_1^2))
\Psi_{2,(z),q^2}(Y_1)
=1.
\end{split}
\end{align}
\end{ex}

In general, we conjecture that if the underlying nonquantum sequence
\eqref{eq:seq2} has a periodicity \eqref{eq:period2},
then the the corresponding quantum sequence
\eqref{eq:seq1} also has the same periodicity \eqref{eq:period1}.
(The converse is trivial as already stated.)
This was  proved for the ordinary cluster algebras in
\cite[Proposition 3.4]{Kashaev11} when $B=B(1)$ is
skew-symmetric.
\bibliography{../../biblist/biblist.bib}
\end{document}